\newtheorem{lemma}{Lemma}
\newtheorem{theorem}{Theorem}
\newtheorem{proposition}{Proposition}
\newcommand{\R}{\mathbb{R}}
\newcommand{\sign}{\text{\rm sign}}
\newcommand{\Breg}[1]{B_{#1}}
\newcommand{\BregS}{B_S}
\newcommand{\dual}[1]{\left\langle #1 \right\rangle }
\newcommand{\f}{{f_\theta}} 
\newcommand{\g}{{g_\theta}} 
\newcommand{\rs}{r_*}
\title{A note on the approximate symmetry of Bregman distances }
\author{Stefan Kindermann\footnote{Industrial Mathematics Institute, Johannes Kepler University Linz, 
Alternbergergstra{\ss}e~69, 4040 Linz (kindermann@indmath.uni-linz.ac.at).}}
\date{}
\begin{document}

\maketitle
\begin{abstract}
The Bregman distance $\Breg{\xi_x}(y,x)$, $\xi_x \in \partial J(y),$ associated 
to a convex sub-differentiable functional $J$ is known to be in general non-symmetric in 
its arguments $x$, $y$. In this note we address the question when  
Bregman distances can be bounded against each other 
when the arguments are switched, i.e., 
if some constant $C>0$ exists such that for all $x,y$ on a convex set $M$ it holds that  
$\frac{1}{C} \Breg{\xi_x}(y,x) \leq \Breg{\xi_y}(x,y) \leq C \Breg{\xi_x}(y,x).$
We state sufficient conditions for such an inequality and prove in particular that
it holds for the $p$-powers of the $\ell_p$ and $L^p$-norms when $1 < p <\infty$. 
\end{abstract} 

\section{Introduction}
For a convex sub-differentiable functional $J$ on a convex subset $M$ of 
a Banach space $X$, the Bregman distance between 
$x,y \in M$ with a chosen element $\xi_x \in \partial J(x) $ is 
defined by 
\[ \Breg{\xi_x}(y,x) = J(y) - J(x) - \dual{\xi_x,y-x} \qquad \xi_x \in \partial J(x). \] 
It is a useful tool in the analysis of optimization problems, and in particular, in the 
Banach space theory of variational regularization, it has become a useful tool to measure 
errors; see, e.g., 
\cite{RS06,HKPS07}. Note that for Hilbert spaces $X$ with \mbox{$J = \frac{1}{2}\|.\|^2$}, the Bregman distance 
equals  $\Breg{\xi_x}(y,x) = \frac{1}{2}\|x-y\|^2$, hence convergence in Bregman distance is often used to 
generalized results on  norm convergence in  Hilbert spaces. 

However, in general, in contrast to the Hilbert space case, the Bregman distance is not symmetric 
in its arguments, i.e.,  $\Breg{\xi_x}(y,x) \not = \Breg{\xi_y}(x,y)$.   It has been observed that 
the conditions for convergence rates for Tikhonov regularization are not the same when 
convergence is measured in the Bregman distance and in its switched version \cite{K16,F18}. 
It is hence of interest, to study the questions when switching the arguments do not 
change the topology of Bregman convergence, i.e., when the switched Bregman 
distance can be bounded by a constant times the original one. 

More precisely, we investigate conditions, when there exists a constant $C_M$ such that 
for all $x,y \in M$ it holds that 
\begin{equation}\label{main} 
\begin{split}
\frac{1}{C_M} \Breg{\xi_x}(y,x) \leq \Breg{\xi_y}(x,y) &\leq C_M \Breg{\xi_x}(y,x) 
 \qquad   \forall \xi_x \in \partial J(x), \xi_y \in \partial J(y),
\end{split}
\end{equation} 
where $M$ is some convex set in $X$.  If \eqref{main} holds, then, when investigating the 
Bregman convergence  of an approximating sequence $y = x_n$ to $x$, 
it does not matter 
which of the two variants,  i.e., $\Breg{\xi_x}(x_n,x)$ 
or $\Breg{\xi_{x_n}}(x,x_n)$ 
one considers.

It is trivial that \eqref{main} holds with the constant $C=1$ 
for $J = \frac{1}{2}\|.\|_H^2$ with $\|.\|_H$ a Hilbert space norm.  However, the inequality does not hold 
in general, as some simple counterexample show. 

A practically useful result that we  show in this paper  is that 
for $p$-powers of the $\ell^p$ or $L^p$-norms we can 
establish the inequality \eqref{main}. The corresponding results reads as follows: 

\begin{theorem}\label{mainth}
For $1 < p <\infty$, 
let $X$ be the space of $p$-summable sequence $X = \ell_p$ or 
 $p$-integrable functions $X = L^p(\Omega)$. Let $J$ be the $p$-power of the corresponding 
norms $J = \|.\|^p$. Then there exist a constant $C_p$ such that 
\begin{equation}\label{main_p} 
\frac{1}{C_p} \Breg{\xi_x}(y,x) \leq \Breg{\xi_y}(x,y) \leq C_p \Breg{\xi_x}(y,x) 
\end{equation} 
for all $x,y \in X$ and $\forall \xi_x \in \partial J(x), \xi_y \in \partial J(y)$. 
A constant $C_p$  is given by $C_p = 2 \max\{\frac{1}{p-1}, p-1\}$. 
\end{theorem}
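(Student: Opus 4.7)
My plan is to reduce \eqref{main_p} to a pointwise scalar inequality using the separable structure of $J = \|\cdot\|^p$, then verify the scalar inequality by exploiting natural symmetries and elementary one-variable estimates (mainly weighted AM--GM and Young's inequalities).

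Since $J$ is separable and, for $1<p<\infty$, Fr\'echet differentiable on both $L^p$ and $\ell^p$, the subgradient $\xi_x = p|x|^{p-1}\sign(x)$ is uniquely determined pointwise; hence
\[ \Breg{\xi_x}(y,x) = \int_\Omega b(x(t),y(t))\,dt \qquad \left(\text{resp. }\sum_i b(x_i,y_i)\right), \]
where $b(\alpha,\beta) := |\beta|^p - |\alpha|^p - p|\alpha|^{p-1}\sign(\alpha)(\beta-\alpha)$ is the scalar Bregman distance of $t\mapsto|t|^p$. By monotonicity of the integral/sum, it suffices to prove the pointwise bound $C_p^{-1}\,b(\alpha,\beta) \le b(\beta,\alpha) \le C_p\,b(\alpha,\beta)$ for all $\alpha,\beta\in\R$. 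This claim is symmetric under the swap $\alpha\leftrightarrow\beta$, so only one of the two inequalities needs to be verified. Using the homogeneity $b(\lambda\alpha,\lambda\beta) = |\lambda|^p b(\alpha,\beta)$ and the evenness $b(-\alpha,-\beta) = b(\alpha,\beta)$, I reduce to $\alpha = 1$ (the boundary case $\alpha=0$ is immediate, giving ratio $p-1$), and distinguish a same-sign sub-case with
\[ b(1,\beta) = \beta^p + (p-1) - p\beta,\qquad b(\beta,1) = (p-1)\beta^p - p\beta^{p-1} + 1 \qquad (\beta>0), \]
and an opposite-sign sub-case $\beta = -c$ with
\[ b(1,-c) = c^p + pc + (p-1),\qquad b(-c,1) = (p-1)c^p + pc^{p-1} + 1 \qquad (c>0). \]

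In the same-sign sub-case the ratio is already controlled by $\max\{p-1, 1/(p-1)\}\le C_p$: for $p\ge 2$, forming $(p-1)\,b(1,\beta) - b(\beta,1) = p[\beta^{p-1} - (p-1)\beta + (p-2)]$, the bracketed expression is nonnegative by the weighted AM--GM inequality with weights $1$ and $p-2$, giving $b(\beta,1) \le (p-1)\,b(1,\beta)$; the case $1<p<2$ follows from an analogous manipulation (reducing to a one-variable polynomial inequality that is evidently nonnegative on $[1,\infty)$ with equality at $1$). I expect the opposite-sign sub-case to be the main obstacle: the term $pc^{p-1}$ in $b(-c,1)$ has no direct counterpart in $b(1,-c)$ and in fact forces the ratio $b(-c,1)/b(1,-c)$ to genuinely exceed $p-1$ at intermediate $c$, which is precisely where the slack factor $2$ in $C_p = 2\max\{p-1,1/(p-1)\}$ is spent. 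The key estimate is Young's inequality $pc^{p-1}\le (p-1)c^p + 1$; substituted into the rearranged target $C_p\,b(1,-c) - b(-c,1) \ge 0$, it collapses the left-hand side to a manifestly nonnegative expression: for $p\ge 2$ to the lower bound $2p(p-1)c + 2p(p-2) \ge 0$, and for $1<p<2$ analogously to $2[1-(p-1)^2]c^p + 2pc \ge 0$. Combined with the swap symmetry this yields the scalar inequality on all of $\R^2$ and, via the separable representation, the theorem.
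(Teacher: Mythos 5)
Your argument is correct, and its technical core is genuinely different from the paper's. The outer structure coincides: both proofs reduce to the scalar Bregman distance of $t\mapsto|t|^p$ via the componentwise/pointwise representation, normalize one argument to $1$ by $p$-homogeneity, and split according to the relative sign of the two arguments (your same-sign/opposite-sign split is exactly the paper's $\theta=\pm1$). The difference is in how the resulting one-variable ratio is bounded. The paper proves a calculus lemma for $1<p<2$ only --- it shows the ratio $\f/\g$ is monotone in $\theta$ so the extremum sits at $\theta=\pm1$, then analyzes interior critical points in $r$ via the first-order condition, and finally transfers the result to $p\geq2$ through the duality identity $\Breg{\xi_y}(x,y)=\Breg{x}^*(\xi_y,\xi_x)$ with conjugate exponent $q=p/(p-1)$. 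You instead treat all $p\in(1,\infty)$ directly by exhibiting the nonnegative combinations $C_p\,b(1,\beta)-b(\beta,1)$ explicitly and certifying them with weighted AM--GM (same sign) and Young's inequality $pc^{p-1}\le(p-1)c^p+1$ (opposite sign); I checked the identities $(p-1)b(1,\beta)-b(\beta,1)=p[\beta^{p-1}-(p-1)\beta+(p-2)]$ and the opposite-sign lower bounds, and they are right. Your route is more elementary and avoids both the critical-point analysis and the duality step; what the paper's version buys is (i) the sharper constant $\frac{1}{t-1}\max_{r\ge1}\frac{r^{t-1}+1}{r^{t-1}+r^{t-2}}$ and (ii) validity of Lemma~\ref{lemmatech} for all $\theta\in[-1,1]$, which is what makes the extension to Hilbert-space-valued $\ell^p(N,H)$ and $L^p(I,H)$ at the end of the paper immediate, whereas your sign-based split covers only the scalar-valued case of the theorem as stated. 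Two cosmetic points: your same-sign inequality for $1<p<2$ must hold on all of $(0,\infty)$, not just $[1,\infty)$ (it does --- the weighted AM--GM $(2-p)\beta^{p-1}+(p-1)\beta^{p-2}\ge1$ needs no restriction --- or alternatively invoke the inversion $\beta\mapsto1/\beta$ to justify restricting to $\beta\ge1$); and your displayed expression $2[1-(p-1)^2]c^p+2pc$ for the $1<p<2$ opposite-sign case is off by the harmless positive factor $\frac{1}{p-1}$. Neither affects correctness.
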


\section{Sufficient conditions for approximate symmetry}
Before we investigate some sufficient conditions for \eqref{main}, we illustrate the 
problem by some (simple) examples.

Consider the abs-functional on $\R$: $J(x) = |x|$. Its subgradient is $\partial J(x)=\sign(x)$ for $x\not =0$ and 
multi-valued $\partial J(x) \in [-1,1]$ at $x = 0$. Hence, for $x\not = 0$,
\[ B_{\xi_0}(x,0)  = |x| - [-1,1]x = |x| [0,2] \qquad  B_{\xi_x}(0,x) = -|x| + \sign(x)x = 0. \]
Thus, \eqref{main} cannot hold in this case.  The fact that the subgradient is here multi-valued 
at one of the arguments 
is not responsible for the violation of inequality ~\eqref{main}  as the example $x >0$, $y = -\epsilon$, $\epsilon >0$
shows:
\[B_{\xi_\epsilon}(x,\epsilon) = 2 x \qquad B_{\xi_x}(\epsilon,x) = 2 \epsilon. \] 
Since $\epsilon$ can be chosen arbitrary small, no constant for  \eqref{main} exists. 

Furthermore, let us illustrate  that the lack of differentiability is not a sole reason for a violation of 
 \eqref{main}. Indeed, we might introduce a Huber-type smoothing of the previous example:
 \[ J(x) = \begin{cases} |x| & |x| \geq 1 \\
                                      x^2 & |x| <1 \end{cases}, \]
such that  $J(x) \in C^1(\R)$. 
Then, for $x >1$ and $y = 1-\epsilon$ with $1>\epsilon>0$, we have  
\[B_{\xi_y}(x,y) = x- 2 x y + y^2 = (1-\epsilon)^2 - x(1 - 2 \epsilon) = 
1 - x + O(\epsilon) \] 
while 
\[B_{\xi_x}(y,x) = (1-\epsilon) \epsilon = O(\epsilon). \] 
Thus,  \eqref{main} cannot hold uniformly in  $\epsilon$. 
Note that  similar counter-examples  to  \eqref{main} can be constructed where $J$ is in $C^\infty$.

The main inequality \eqref{main} can certainly be established if appropriate upper and lower 
bounds for the Bregman distances can be verified. For functionals involving powers of norms, 
Xu and Roach \cite{XR91} established useful estimates  that relate upper and lower 
bounds for  Bregman distances to smoothness and strict convexity. 
 It is thus not surprising 
that $C_M$ will be related to the ratio of quantities representing smoothness and strict convexity. 
Before we elaborate on that, let us state  that Theorem~\ref{mainth} cannot be 
obtained by a simple application of 
the Xu-Roach inequalities but requires a more detailed analysis. 
Indeed, for the $l^p$- or $L^p$-case the Xu-Roach inequalities imply the following estimates 
on bounded sets:
\[ B_{\xi_x}(y,x) \leq C \|x-y\|^{\min\{p,2\}} \qquad 
B_{\xi_x}(y,x) \geq C \|x-y\|^{\max\{p,2\}}.  \] 
Thus, except for  the trivial Hilbert space case $p = 2$, the exponents do not match 
to establish  \eqref{main} in a simple manner. 

Before we state a general result, 
we give some reformulations of the main inequality:
Obviously, for \eqref{main} to hold, it is enough that 
\begin{equation}\label{main1}
 \Breg{\xi_y}(x,y)  \leq  C_M \Breg{\xi_x}(y,x)  
\end{equation}  
for all $x,y \in M$ and all $\xi_y \in \partial J(y), \xi_x \in \partial J(x)$  
as the other inequality follows easily from that one by switching arguments. 

We may also  introduce the symmetric Bregman distance with $\xi_x \in \partial J(x)$ and 
$\xi_y \in \partial J(y)$ 
\[ \BregS(x,y) = \Breg{\xi_y}(x,y)  + \Breg{\xi_x}(y,x) = \dual{\xi_x- \xi_y ,x-y}.  \] 
Then the main inequality can be expressed in terms of the symmetric version. 
Indeed, by adding $C_M   \Breg{\xi_y}(x,y)$ to  \eqref{main1}, then we get 
the following lemma. 
\begin{lemma}\label{lem1} 
Inequality \eqref{main1} holds with some constant $C_M$  
if and only if 
\begin{equation}\label{main2} 
\begin{split}
\Breg{\xi_y}(x,y)  &\leq   \eta_M  \BregS (x,y) 
\end{split} 
\end{equation} 
holds with some constant $\eta_M <1$. 
The constants are related by 
\[ C_M = \frac{\eta_M}{1-\eta_M} \qquad \eta_M = \frac{C_M}{C_M+1}. \]
\end{lemma}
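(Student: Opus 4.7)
The proof is a purely algebraic equivalence obtained by substituting the definition $\BregS(x,y) = \Breg{\xi_y}(x,y) + \Breg{\xi_x}(y,x)$ into the two inequalities; no analytic content is required, and the statement that the proof goes through essentially as hinted by the paragraph preceding the lemma.

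For the forward direction, starting from \eqref{main1}, i.e.\ $\Breg{\xi_y}(x,y) \leq C_M \Breg{\xi_x}(y,x)$, I would add $C_M \Breg{\xi_y}(x,y)$ to both sides. The right-hand side then becomes $C_M\bigl(\Breg{\xi_x}(y,x) + \Breg{\xi_y}(x,y)\bigr) = C_M \BregS(x,y)$ by definition of the symmetric Bregman distance, while the left-hand side becomes $(1+C_M)\Breg{\xi_y}(x,y)$. Dividing by $1+C_M$ yields \eqref{main2} with $\eta_M = C_M/(C_M+1)$, which is automatically strictly less than $1$.

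For the converse, assuming $\Breg{\xi_y}(x,y) \leq \eta_M \BregS(x,y)$, I would expand the symmetric distance on the right and move the term $\eta_M \Breg{\xi_y}(x,y)$ to the left to obtain $(1-\eta_M)\Breg{\xi_y}(x,y) \leq \eta_M \Breg{\xi_x}(y,x)$. The assumption $\eta_M < 1$ makes the factor $1-\eta_M$ strictly positive, so I may divide through to recover \eqref{main1} with $C_M = \eta_M/(1-\eta_M)$. The two explicit formulas are mutual inverses, which verifies the stated bijective relation between the constants.

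There is essentially no obstacle. The one point worth flagging is that both \eqref{main1} and \eqref{main2} are meant to hold uniformly over all admissible pairs $\xi_x \in \partial J(x)$, $\xi_y \in \partial J(y)$; since every step above is carried out pointwise in such a pair, the equivalence is preserved uniformly in the choice of subgradients. Nonnegativity of the individual Bregman distances (which follows from convexity of $J$) is used only implicitly to ensure that $\BregS(x,y) \geq 0$ so that the bound \eqref{main2} is sensible, and is not invoked in any of the algebraic rearrangements themselves.
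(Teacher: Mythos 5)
Your proposal is correct and follows exactly the route the paper indicates (the paper's entire proof is the remark that one adds $C_M \Breg{\xi_y}(x,y)$ to both sides of \eqref{main1}); your converse direction and the check that the two formulas for the constants are mutually inverse simply spell out the details the paper leaves implicit.
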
 
Note  that \eqref{main2} always trivially holds with $\eta = 1$.

We also point out that the approximate symmetry holds for a functional $J$ if it holds for its dual $J^*$. 
Indeed, from the  well-know relations \cite{ET99}
\[ \dual{\xi_y,y} = J(y) + J^*(\xi_y),  \qquad 
x^* \in  \partial J(x) \Leftrightarrow  x \in  \partial J^*(x^*),  \] 
we can conclude that 
\begin{equation}\label{dualf} \Breg{\xi_y}(x,y) = \Breg{x}^*(\xi_y,\xi_x), \end{equation}
where  $\Breg{x}^*$ is the Bregman distance associated to the dual functional $J^*$ and 
$\xi_y,$ $\xi_x$ are elements of the subgradients at $y$ and $x$, respectively. 
Thus, we have 
\begin{lemma}
The inequality \eqref{main} holds for the Bregman distance of a  functional $J$  on $M$ with constant $C_M$ if and
only if it holds with the same constant for the  Bregman distance of the  dual  functional $J^*$ on 
$\partial J(M).$
\end{lemma}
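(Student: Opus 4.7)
The plan is to use the key identity \eqref{dualf} already displayed in the excerpt, namely $\Breg{\xi_y}(x,y) = \Breg{x}^*(\xi_y,\xi_x)$, and to show that under this identification the approximate symmetry inequality for $J$ on $M$ translates literally into the approximate symmetry inequality for $J^*$ on $\partial J(M)$.

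First, I would apply \eqref{dualf} both to $\Breg{\xi_y}(x,y)$ and (by swapping the roles of $x$ and $y$) to $\Breg{\xi_x}(y,x) = \Breg{y}^*(\xi_x,\xi_y)$. Substituting these into \eqref{main} transforms the two-sided bound into
\[
 \frac{1}{C_M}\Breg{y}^*(\xi_x,\xi_y) \;\leq\; \Breg{x}^*(\xi_y,\xi_x) \;\leq\; C_M\Breg{y}^*(\xi_x,\xi_y),
\]
which is exactly the shape of \eqref{main} with $J$ replaced by $J^*$ and with the two arguments taken to be $\xi_x,\xi_y$, using $x$ and $y$ as the associated subgradient choices.

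Next, I would justify that the quantification matches on both sides. Using the standard Fenchel equivalence $\xi_x \in \partial J(x) \Leftrightarrow x \in \partial J^*(\xi_x)$, one sees that as $x$ ranges over $M$ and $\xi_x$ over $\partial J(x)$, the pair $(\xi_x,x)$ ranges exactly over all pairs $(u,\eta_u)$ with $u\in \partial J(M)$ and $\eta_u\in \partial J^*(u)$; the same holds for $(\xi_y,y)$. Hence the statement ``for all admissible $x,y,\xi_x,\xi_y$'' on the $J$-side is logically equivalent to ``for all admissible $u,v,\eta_u,\eta_v$'' on the $J^*$-side. This establishes both directions of the ``if and only if'' with the same constant $C_M$, because \eqref{dualf} is an equality, not an inequality, so no loss is incurred.

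I do not anticipate a serious obstacle here: the content of the lemma is essentially a bookkeeping exercise once \eqref{dualf} is in hand. The only mildly delicate point is to be careful that the correct set on the dual side is $\partial J(M)$ (the image of $M$ under the subdifferential), and that every choice of dual subgradient $\eta_u \in \partial J^*(u)$ is matched by an actual $x\in M$ with $\xi_x = u$; this uses the Fenchel equivalence noted above. Since the conclusion is symmetric in $J$ and $J^{**}=J$, and the constant transfers unchanged, the biconditional follows immediately.
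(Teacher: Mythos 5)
Your proposal is correct and follows exactly the route the paper intends: the lemma is stated as an immediate consequence of the identity \eqref{dualf} together with the Fenchel equivalence $\xi_x \in \partial J(x) \Leftrightarrow x \in \partial J^*(\xi_x)$, and the paper gives no further argument. Your additional care with the quantifier bookkeeping (that the pairs $(x,\xi_x)$ and $(u,\eta_u)$ range over the same admissible set) only makes explicit what the paper leaves implicit.
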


Let us now state a  general result on approximate symmetry in  the following theorem:
\begin{theorem}\label{main3}
Suppose that $\partial J$ is strongly monotone on $M$
\[ \dual{\partial J(x) -\partial J(y),x-y} \geq c_0 \|x-y\|^2 \qquad \forall x,y \in M, \]
and 
Lipschitz continuous on $M$ with Lipschitz constant $L$.  Then
\eqref{main1} holds on for all $x,y \in M$
with a constant $C_M = \frac{L}{c_0}$.
\end{theorem}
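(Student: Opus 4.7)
The strategy is to prove the one-sided bound \eqref{main1} directly, by sandwiching each Bregman distance between multiples of $\|x-y\|^2$; the reverse bound in \eqref{main} then follows by swapping the roles of $x$ and $y$. Lipschitz continuity of $\partial J$, read as $\|\xi_x-\xi_y\|_{X^*} \leq L\|x-y\|$ for every selection, forces the subdifferential to be single-valued on $M$, so we may write $\partial J(x) = \{\nabla J(x)\}$ with $\nabla J\colon M \to X^*$ an $L$-Lipschitz map. For any $x,y \in M$, convexity of $M$ keeps the segment $z_t := y + t(x-y)$ in $M$, and the fundamental theorem of calculus applied to $\varphi(t) = J(z_t)$ yields the standard representation
\[
\Breg{\xi_y}(x,y) = \int_0^1 \dual{\nabla J(z_t) - \nabla J(y),\,x-y}\,dt,
\]
together with an analogous formula for $\Breg{\xi_x}(y,x)$.

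The upper estimate on $\Breg{\xi_y}(x,y)$ follows by applying Cauchy--Schwarz inside the integral and using $\|\nabla J(z_t)-\nabla J(y)\|_{X^*} \leq L t\|x-y\|$, which integrates to $\Breg{\xi_y}(x,y) \leq \frac{L}{2}\|x-y\|^2$. For the matching lower estimate on $\Breg{\xi_x}(y,x)$, I parametrize along the reverse segment, rewrite the integrand as $\dual{\nabla J(z_t)-\nabla J(x),y-x} = t^{-1}\dual{\nabla J(z_t)-\nabla J(x),\, z_t-x}$, and use strong monotonicity to extract the pointwise lower bound $c_0 t\|x-y\|^2$. Integration yields $\Breg{\xi_x}(y,x) \geq \frac{c_0}{2}\|x-y\|^2$. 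Combining the two bounds,
\[
\Breg{\xi_y}(x,y) \;\leq\; \frac{L}{2}\|x-y\|^2 \;=\; \frac{L}{c_0}\cdot\frac{c_0}{2}\|x-y\|^2 \;\leq\; \frac{L}{c_0}\,\Breg{\xi_x}(y,x),
\]
which is exactly \eqref{main1} with $C_M = L/c_0$.

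The main obstacle is bookkeeping rather than anything deep: one has to verify that Lipschitz continuity of a subgradient selection already forces single-valuedness of $\partial J$ on $M$ (take $x=y$ in the Lipschitz estimate), and that $t\mapsto J(z_t)$ is differentiable on $[0,1]$ with derivative $\dual{\nabla J(z_t),x-y}$, so that the fundamental theorem of calculus applies in its elementary form. Since $\nabla J$ is continuous on the compact segment, the integrands are continuous functions of $t$ and no delicate measurability or Bochner-integration argument is needed. A stylistic alternative would be to route the conclusion through Lemma~\ref{lem1} by verifying \eqref{main2} with $\eta_M = L/(L+c_0)$, but the direct ratio argument above seems cleaner and gives the same constant.
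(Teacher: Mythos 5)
Your proposal is correct and follows essentially the same route as the paper: the integral representation of each Bregman distance along the connecting segment, the Lipschitz bound giving $\tfrac{L}{2}\|x-y\|^2$ from above, and the $t^{-1}$ rescaling trick combined with strong monotonicity giving $\tfrac{c_0}{2}\|x-y\|^2$ from below. The only difference is cosmetic (you bound $\Breg{\xi_y}(x,y)$ above and $\Breg{\xi_x}(y,x)$ below, while the paper does the reverse assignment, which amounts to swapping $x$ and $y$), and your added remarks on single-valuedness of $\partial J$ and the applicability of the fundamental theorem of calculus make explicit details the paper leaves implicit.
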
 
\begin{proof}
The result follows from the estimates 
\begin{align*}   \Breg{\xi_x}(y,x) &= \int_0^1 \dual{\partial J(x + t(y-x))- \partial J(x),y-x} dt  \\
& = 
L \int_0^1  t dt  \|x-y\|^2 = \frac{L}{2} \|x-y\|^2 \qquad \mbox{ and } \\ 
  \Breg{\xi_y}(x,y) &= \int_0^1 \dual{\partial J(y + t(x-y))- \partial J(y),x-y} dt   \\
  & =  
\int_0^1 \frac{1}{t} \dual{\partial J(y + t(x-y))(x-y)- \partial J(y),(y + t(x-y)-y} dt \\
&\geq c_0 \int_0^1   \frac{1}{t} \|y + t(x-y)-y\|^2 dt = \frac{c_0}{2}\|x-y\|^2 .
\end{align*}
\end{proof}

Note that for a $C^2(M)$-functional with strongly monotone gradient, 
we can estimate the constants in the previous theorem by 
upper and lower bounds for  the second derivative:
\begin{equation}\label{sder}  C_M \leq \frac{ \sup_{x \in M, \|h\| \leq 1}  |\dual{\nabla^2J(x) h,h}|}
{ \inf_{x \in M, \|h\| \leq 1}  |\dual{\nabla^2J(x) h,h}| }.
\end{equation}

As an example, we consider the square-root regularization of the abs-functional, which is often employed when 
a differentiable approximation to the $\ell^1$-norm is needed: 
\[ J: \R \to \R, \quad  x \to \sqrt{x^2 +\epsilon}, \qquad \epsilon >0. \] 
We show that the associated Bregman distance satisfies \eqref{main1} on bounded sets: 
We have for the derivatives 
\[ J'(x) = \frac{x}{\sqrt{x^2+\epsilon}} \qquad 
J''(x) = \frac{\epsilon}{(x^2+\epsilon)^\frac{3}{2}}. \]  
A direct calculation reveals a constant 
\[ C_M = \sup_{x,y\in M} \frac{\sqrt{y^2+\epsilon}}{\sqrt{x^2+\epsilon}}. \]
In particular, if  we restrict the functional to bounded sets, $|x|,|y| \leq R$ then 
\[ C_M  \leq \sqrt{1 + \tfrac{R^2}{\epsilon}} \leq 1 + \tfrac{R}{\sqrt{\epsilon}}. \] 
Using  \eqref{sder} 
gives a slightly worse bound 
 \[ C_M \leq \frac{\sup_{|x| \leq R} J''(x)}{\inf_{|x| \leq R} J''(x)}   = 
  \frac{\sup_{|x| \leq R} (|x|^2 +\epsilon)^\frac{3}{2}}{\inf_{|x|  \leq R}   |x|^2 +\epsilon)^\frac{3}{2}} 
  \leq \left( \tfrac{R^2}{\epsilon} + 1\right)^\frac{3}{2} \leq 
  1 +  O(\tfrac{R}{\sqrt{\epsilon}})^\frac{3}{2} \] 
  for $\tfrac{R}{\sqrt{\epsilon}} >>1$.

The same theorem can be used to prove \eqref{main} for the $p$-power functional in 
Theorem~\eqref{mainth}
if we restrict $x,y$ to bounded sets.  Below, however, we obtain the result without the restriction 
to bounded sets by a direct calculation.

For the sake of generalizations, we slightly improve Theorem~\ref{main3} by 
showing that the conditions in \eqref{main3} can 
be localized in the following sense:
\begin{proposition}\label{Propmain3}
Suppose that \eqref{main} holds for all $x,y$ in  \mbox{$M_0 = \{\|x-y\| \leq \epsilon\}$}. 
Then  there exists a constant  $C_M$ such that 
 \eqref{main} holds in  $M = \{\|x-y\| \leq R\}$, $R>\epsilon$, with  
 \[ C_M = \frac{R-\epsilon}{\epsilon} + \frac{R C_{M_0}}{\epsilon}. \]
\end{proposition}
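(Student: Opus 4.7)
The plan is to interpolate between $x$ and $y$ along the segment and apply the three-point identity for Bregman distances, invoking the $M_0$-hypothesis on the short piece near $y$. If $\|x-y\|\leq \epsilon$ the hypothesis applies directly, yielding $\Breg{\xi_y}(x,y)\leq C_{M_0}\Breg{\xi_x}(y,x)$, and one checks that $C_{M_0}\leq \tfrac{R-\epsilon}{\epsilon}+\tfrac{R\,C_{M_0}}{\epsilon}$ (equivalent to $\epsilon\leq R$), so the statement follows; the substantive case is $\|x-y\|>\epsilon$, which I treat next.

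In that case, set $s=1-\epsilon/\|x-y\|\in(0,1)$ and define $z=(1-s)x+sy$ on the segment, so that $\|y-z\|=\epsilon$ (hence $(y,z)\in M_0$) and $\|x-z\|=\|x-y\|-\epsilon$. Pick any $\xi_z\in\partial J(z)$. Using the collinearity relations $x-z=\tfrac{s}{1-s}(z-y)$ and $y-z=\tfrac{1-s}{s}(z-x)$, the cross terms in the standard three-point identity for the Bregman distance become multiples of the symmetric Bregman distances $\BregS(z,y)$ and $\BregS(z,x)$; collecting terms yields
\[
\Breg{\xi_y}(x,y) = \Breg{\xi_z}(x,z) + \tfrac{1}{1-s}\Breg{\xi_y}(z,y) + \tfrac{s}{1-s}\Breg{\xi_z}(y,z),
\]
\[
\Breg{\xi_x}(y,x) = \Breg{\xi_z}(y,z) + \tfrac{1}{s}\Breg{\xi_x}(z,x) + \tfrac{1-s}{s}\Breg{\xi_z}(x,z).
\]

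Applying the $M_0$-hypothesis to $(y,z)$ gives $\Breg{\xi_y}(z,y)\leq C_{M_0}\Breg{\xi_z}(y,z)$, so the first identity yields $\Breg{\xi_y}(x,y)\leq \Breg{\xi_z}(x,z)+\tfrac{s+C_{M_0}}{1-s}\Breg{\xi_z}(y,z)$, while dropping the non-negative $\tfrac{1}{s}\Breg{\xi_x}(z,x)$ from the second identity gives the constraint $\Breg{\xi_z}(y,z)+\tfrac{1-s}{s}\Breg{\xi_z}(x,z)\leq \Breg{\xi_x}(y,x)$. Maximising the linear objective over non-negative $(\Breg{\xi_z}(x,z),\Breg{\xi_z}(y,z))$ subject to this single linear constraint is an elementary two-variable LP; because $\tfrac{s+C_{M_0}}{1-s}\geq \tfrac{s}{1-s}$, the optimum is attained at the vertex $\Breg{\xi_z}(x,z)=0$, $\Breg{\xi_z}(y,z)=\Breg{\xi_x}(y,x)$, with value $\tfrac{s+C_{M_0}}{1-s}\Breg{\xi_x}(y,x)$.

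Substituting $1-s=\epsilon/\|x-y\|$ and using $\|x-y\|\leq R$ gives
\[
\tfrac{s+C_{M_0}}{1-s}=\tfrac{\|x-y\|-\epsilon+\|x-y\|\,C_{M_0}}{\epsilon}\leq \tfrac{R-\epsilon}{\epsilon}+\tfrac{R\,C_{M_0}}{\epsilon},
\]
which is precisely the claimed $C_M$. The main obstacle is the LP step: a naive argument that estimates $\Breg{\xi_z}(x,z)$ and $\Breg{\xi_z}(y,z)$ separately from the constraint produces the weaker bound $\tfrac{2(R-\epsilon)}{\epsilon}+\tfrac{R\,C_{M_0}}{\epsilon}$, so using the constraint as a whole is essential for matching the stated constant.
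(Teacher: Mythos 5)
Your proof is correct. I checked the two three-point identities (the coefficients of $J(x)$, $J(y)$, $J(z)$ and of the duality pairings all cancel as claimed, using $x-z=s(x-y)$ and $y-z=(1-s)(y-x)$), the application of the $M_0$-hypothesis to the pair $(y,z)$ with $\|y-z\|=\epsilon$ is a legitimate instance of \eqref{main}, the two-variable LP step is sound, and $\frac{s+C_{M_0}}{1-s}=\frac{\|x-y\|-\epsilon+\|x-y\|\,C_{M_0}}{\epsilon}\leq\frac{R-\epsilon}{\epsilon}+\frac{R\,C_{M_0}}{\epsilon}$ is exactly the stated constant. The geometric idea coincides with the paper's --- insert an intermediate point $z$ on the segment so that the short piece lies in $M_0$ and the hypothesis applies there, while the long piece is handled by monotonicity of the subgradient --- but the bookkeeping is genuinely different. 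The paper anchors $z$ near $x$ with the \emph{fixed} ratio $\lambda=\epsilon/R$, converts the hypothesis via Lemma~\ref{lem1} into the form $\Breg{\xi_x}(z,x)\leq\eta_{M_0}\BregS(z,x)$, runs a chain of inequalities ending in $\eta_M=\eta_{M_0}\lambda+1-\lambda$, and translates back to $C_M$; you anchor $z$ at distance exactly $\epsilon$ from $y$, stay entirely in the asymmetric formulation with $C_{M_0}$, and use exact identities plus the LP, which yields the slightly sharper per-pair bound $\frac{\|x-y\|(1+C_{M_0})}{\epsilon}-1$ before the final estimate $\|x-y\|\leq R$. Your approach avoids Lemma~\ref{lem1} altogether and is arguably more transparent. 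One small omission: you establish only the one-sided inequality \eqref{main1}; as the paper observes, the full two-sided \eqref{main} then follows by exchanging the roles of $x$ and $y$, which your construction permits since it is symmetric in the two endpoints --- you should state this explicitly.
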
 
\begin{proof}
Denote by  $B_R(x) := \{x + z\,|\,\|z\| \leq R\}$  a ball around $x$ with radius $R$, and 
fix $x \in X$. According to Lemma~\ref{lem1} and the hypothesis, we have a 
constant $\eta_{M_0} = \frac{C_{M_0}}{1+C_{M_0}}$ such that 
\eqref{main2} holds for all $z$ in $B_\epsilon(x)$. 
Take $y \in B_R(x)$ and set $z = x + \lambda (y-x)$ and $\lambda =\frac{\epsilon}{R}<1$. 
Note that 
\[ z-x = \lambda (y-x), \quad y-z = (1-\lambda) (y-x). \]
Then $z \in B_\epsilon(x)$. 
We have 
\begin{align*} 
\Breg{\xi_x}(y,x) &= J(y) - J(x) - \dual{\xi_x,y-x} \\ 
& = J(z) - J(x)  -\dual{\xi_x,z-x} + J(y) - J(z) - \dual{\xi_x,y-z}  \\
&\leq \eta_{M_0}\dual{\xi_z-\xi_x,z-x} \\
& \qquad  +  \left(J(y) - J(z) - \dual{\xi_z,y-z} \right) + \dual{\xi_z -\xi_x,y-z} \\
& \leq \eta_{M_0}\dual{\xi_z-\xi_x,z-x}  + 
\dual{\xi_y -\xi_z, y-z} +  \dual{\xi_z -\xi_x,y-z} \\
& = 
\left( {\eta_{M_0}}\lambda + (1 - {\lambda})\right) \dual{\xi_z -\xi_x,y-x}
+ (1 -{\lambda}) \dual{\xi_y -\xi_z, y-x}  \\
& \leq  \max\left\{  {\eta_{M_0}}\lambda + 1 - {\lambda}, 
 1 -{\lambda} \right\} \left(  \dual{\xi_z -\xi_x,y-x} + \dual{\xi_y -\xi_z, y-x} \right) \\ 
& =   \left(\frac{\eta_{M_0}}{\lambda} + 1 - {\lambda}\right)
\dual{\xi_y -\xi_x, y-x} =
\left({\eta_{M_0}}{\lambda} + 1 - {\lambda}\right) \Breg{S}(y,x),
 \end{align*}
which proves  \eqref{main2} with $\eta_M = 
\left({\eta_{M_0}}{\lambda} + 1 - {\lambda}\right)$   
for all $y \in B_R(x)$, from which the constant $C_M$ can be calculated. 
Note that  the estimate in the penultimate line is valid because both expression 
 with  brackets $\dual{.,}$ in the previous line are nonnegative.  
Switching the role of $x,y$ proves the assertion.
\end{proof}

\section{The proof of Theorem~\ref{mainth}}
We prove Theorem~\ref{mainth} by a direct calculation of the constant $C_M$. 
Slightly more general as stated in the theorem, 
 we consider the $p$-power of a Hilbert space norm. For  $p \in (1,\infty)$, define  
\[ J: H \to \R \quad x \to \frac{1}{p} \|x\|^p\] 
with $\|.\|$ the norm on $H$. 
Note that $J$ is continuously differentiable with 
\[ \partial J(x) =  e_x   \|x\|^{p-1} \qquad   e_x :=\begin{cases} \frac{x}{\|x\|} &  x \not = 0, \\ 
                                                   0 &  x = 0. \end{cases}  \] 
To prove the theorem, we establish the following technical lemma: 

\begin{lemma}\label{lemmatech}
Let $1 < p <2$, and define the functions 
\begin{alignat*}{2} 
 \f(r) &= \frac{1}{p} r^p +  (1- \frac{1}{p})  - r \theta, \quad &r \geq 0, \theta \in [-1,1],  \\
 \g(r) &=
\frac{1}{p}  + (1  -\frac{1}{p}) r^p  - r^{p-1} \theta   \quad &r \geq 0, \theta \in [-1,1] . 
\end{alignat*}                                                   
Then 
\begin{equation}
\max_{r\geq 0,  \theta \in [-1,1] }  \frac{\f(r)}{\g(r)} \leq 
\frac{1}{p-1} \max_{r\geq1} 
\frac{{r}^{p-1} + 1}{{r}^{p-1} +  {r}^{p-2}} \leq \frac{2}{p-1}.
\end{equation}
\end{lemma}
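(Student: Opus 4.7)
The plan is to reduce the two-variable maximization first in $\theta$, exploiting that the ratio is a M\"obius function of $\theta$, and then in $r$, via the critical-point identity $f/g = f'/g'$ together with a case split on the location of the maximizer. For fixed $r \geq 0$, I write $\f(r) = A - r\theta$ and $\g(r) = B - r^{p-1}\theta$ with $A = \tfrac{1}{p}r^p + \tfrac{p-1}{p}$ and $B = \tfrac{1}{p} + \tfrac{p-1}{p}r^p$; then $\partial_\theta(\f/\g) = (r^{p-1}A - rB)/\g^2$ has a sign independent of $\theta$, so the ratio is monotone in $\theta$ on any interval avoiding the zeros of $\g$. A short computation shows that for $\theta \in [-1,1]$ the denominator $\g$ vanishes only at $(r,\theta) = (1,1)$, where $\f$ vanishes as well; a Taylor expansion at $r = 1$ gives $\f \sim \g \sim \tfrac{p-1}{2}(r-1)^2$, so the ratio extends continuously to the value $1$ at that point. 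Hence it suffices to bound $\f/\g$ for $\theta \in \{-1,+1\}$.

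For $\theta = 1$, I will prove the direct bound $f_1(r)/g_1(r) \leq 1/(p-1)$, which after clearing denominators reduces to $(p-1)r - r^{p-1} \geq p - 2$. The function $\psi(r) = (p-1)r - r^{p-1}$ has derivative $(p-1)(1 - r^{p-2})$ vanishing only at $r = 1$, where $\psi$ attains its global minimum $\psi(1) = p - 2$, so the inequality holds. Writing $h(r) := (r^{p-1}+1)/(r^{p-1}+r^{p-2})$ for the function appearing on the right of the lemma, the case $\theta = 1$ contributes at most $1/(p-1) = h(1)/(p-1) \leq \frac{1}{p-1}\max_{r \geq 1} h(r)$.

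For $\theta = -1$, the functions $f_{-1}, g_{-1}$ are smooth and strictly positive on $[0,\infty)$, with $f_{-1}/g_{-1}$ equal to $p-1$ at $r = 0$, equal to $1$ at $r = 1$, and tending to $1/(p-1)$ as $r \to \infty$. If the supremum is attained at an interior critical point $r_*$, the identity $(f/g)'(r_*) = 0$ combined with $f_{-1}'(r) = r^{p-1}+1$ and $g_{-1}'(r) = (p-1)r^{p-2}(r+1)$ yields
\[ \frac{f_{-1}(r_*)}{g_{-1}(r_*)} = \frac{f_{-1}'(r_*)}{g_{-1}'(r_*)} = \frac{h(r_*)}{p-1}. \]
If $r_* \geq 1$, this is bounded by $\frac{1}{p-1}\max_{r \geq 1} h(r)$ directly; if $r_* \in (0,1)$, the inequalities $r_*^{p-2} > 1 > r_*^{p-1}$ give $h(r_*) < 1 = h(1) \leq \max_{r \geq 1} h(r)$. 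If instead the supremum is the limit at infinity or the boundary value at $r = 0$, it is at most $1/(p-1) = h(1)/(p-1)$. Finally, for $r \geq 1$ we have $r^{p-1} \geq 1$ and $r^{p-2} \leq 1$, so $r^{p-1}+1 \leq 2r^{p-1} \leq 2(r^{p-1}+r^{p-2})$, giving $h(r) \leq 2$. The hard part will be the $\theta = -1$ case: I need to exclude that the supremum is achieved at a point invisible to the critical-point identity, which is handled by combining the limit/boundary analysis with the case split on $r_*$ above.
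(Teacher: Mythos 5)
Your proof is correct and follows essentially the same strategy as the paper: reduce to $\theta=\pm1$ by monotonicity of the ratio in $\theta$ (with the continuous extension at the common zero $(r,\theta)=(1,1)$), then use the critical-point identity $f/g=f'/g'$ for $\theta=-1$ to bound the relevant maxima by $\frac{1}{p-1}\max_{r\geq 1}\frac{r^{p-1}+1}{r^{p-1}+r^{p-2}}\leq\frac{2}{p-1}$. Your handling of $\theta=1$ via the direct global inequality $(p-1)f_1\leq g_1$, equivalently $(p-1)r-r^{p-1}\geq p-2$, is a small clean improvement over the paper's contradiction argument, and your explicit treatment of critical points in $(0,1)$ and of the boundary and limit values makes the case analysis slightly more airtight, but the route is the same.
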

\begin{proof}
Both functions are positive with a zero only at $r =1$ and $\theta = 1$.                             
The ratio function $\frac{\f(r)}{\g(r)}$ is thus a positive smooth function except for $r = 1,\theta = 1$ and 
has  the following limit values for any $\theta \in [-1,1]$  
\[ \lim_{r\to 0} \frac{\f(r)}{\g(r)} = p-1,
\qquad  \lim_{r\to \infty} \frac {\f(r)}{\g(r)}  = \frac{1}{p-1}, \qquad
\lim_{r\to 1} \frac{\f(r)}{\g(r)}= 1. \]                                                   
In the case $\theta=1$, the last limit can be  evaluated by two-times applying de~l'Hospital's rule. 
Thus, for fixed $\theta$ the ratio function can be continuously extended to all $r\geq0$.

For a fixed value $r \not =1$, we calculate the derivative 
\begin{align*} 
\frac{\partial}{\partial \theta} \frac{\f(r)}{\g(r)} &= 
\frac{1}{\g(r)^2}\left(  \f(r) r^{p-1} -  r \g(r)  \right) \\
& = \frac{1}{\g(r)^2}\left(\frac{1}{p} r^{2p-1} +  (1- \frac{1}{p}) r^{p-1} 
- \frac{1}{p}r   + (1  -\frac{1}{p}) r^{p+1} \right)
\end{align*}
and observed that this value  is always of the same  sign for a fixed  $r$. Thus 
the maximal value with respect to $\theta$ is attained at the boundary of $[-1,1]$:
\[ \max_{\theta \in [-1,1]}  \frac{\f(r)}{\g(r)} \leq 
 \max_{\theta \in \{-1,1\}}  \frac{\f(r)}{\g(r)}. \]

We now investigate those interior maxima of the ratio function with respect to $r$ 
in the interval $(0,1)$ and $(1,\infty)$ that 
 have a value larger than the largest currently found $\frac{\f(\infty)}{\g(\infty)}=\frac{1}{p-1}$. 
As the ratio  is a smooth function, we may conclude from the optimality condition of first  order that 
at such a maximum $\rs$, we have 
\[  \f'(\rs) = \frac{\f(\rs)}{\g(\rs)} \g'(\rs). \]
Setting $\lambda =  \frac{\f(\rs)}{\g(\rs)}>\frac{1}{p-1} $ (recall that only maxima larger than 
$\frac{1}{p-1}$ are of interest)  yields the condition 
\begin{align*} 
{\rs}^{p-1} - \theta &= \lambda (p-1) ({\rs}^{p-1} - \theta {\rs}^{p-2}),  \\
\rs &\geq 0, \qquad  \lambda (p-1) \geq 1,  \qquad \theta \in \{-1,1\}. 
\end{align*}
We investigate the cases $\theta = 1$ and $\theta = -1$ separately. 

In case that $\theta = 1$, the  optimality condition read  
\[ 1 - \frac{1}{(\rs)^{p-1}} = \lambda (p-1)   (1-  \frac{1}{\rs}). \]
Thus, for $\lambda (p-1) >1$ to hold the following inequalities must be satisfied:
\[ 
\begin{cases}      
 1 - \frac{1}{(\rs)^{p-1}}  >  1-  \frac{1}{\rs} \Leftrightarrow 
 \rs < (\rs)^{p-1} 
\qquad & \mbox{ if } \rs >1, \\ 
 \frac{1}{(\rs)^{p-1}} -1 >  \frac{1}{\rs}-1 \Leftrightarrow 
 \rs > (\rs)^{p-1}  & \mbox{ if }  \rs <1.
\end{cases} 
\]
However in both cases the corresponding inequality cannot be true  since $p-1 \in (0,1)$. Thus in this case 
no interior maximum with a value larger than $\frac{1}{p-1}$  can exist.

In case that $\theta =-1$ the optimality condition reads 
\[ {\rs}^{p-1} +1 =  \lambda (p-1) ({\rs}^{p-1} +  {\rs}^{p-2}) 
 \geq ({\rs}^{p-1} +  {\rs}^{p-2}),  \]
thus, $\rs \geq 1$. 
Hence we have that 
\begin{equation} \frac{\f(\rs)}{\g(\rs)}= \lambda \leq \frac{1}{p-1} \max_{r\geq1} 
\frac{{r}^{p-1} + 1}{{r}^{p-1} +  {r}^{p-2}}  = 
\frac{1}{p-1} \left(1 +  \max_{r\geq1} \frac{1-{r}^{p-2} }{{r}^{p-1} +  {r}^{p-2}} \right).
\end{equation}
Since for $r \geq 1$ 
\[ \frac{{r}^{p-1} + 1}{{r}^{p-1} +  {r}^{p-2}} = 
\frac{{r}^{p-1} + 1}{{r}^{p-1}}  \frac{r}{r+1} \leq  2,
\]
we have  established the upper bound $\frac{2}{p-1}$.
\end{proof}

We now continue with the proof of Theorem~\ref{mainth}. 
We observe that for the $l^p$- or $L^p$-case we can express the Bregman distances componentwise:
\[ \Breg{\xi_y, \frac{1}{p}\|.\|_{l^p}^p}(x,y) = \sum_{i=1}^\infty  \Breg{\xi_{y_i}}(x_i,y_i), \] 
where  $\Breg{\xi_{y_i}}(x_i,y_i),$ is the Bregman distance for the functional $J:\R \to \R$, $J(x) = \frac{1}{p}|x|^p$. 
Thus, for \eqref{main1} to hold it is enough to prove the corresponding inequality for 
the Bregman distance of this functional, 
$\Breg{\xi_{y}}(x,y)$, $x,y \in \R$.

We have that  for $y \not = 0$ 
\[ \Breg{\xi_y}(x,y) = \|y\|^{p}\Breg{\xi_{e_y}}(\tfrac{x}{\|y\|},  e_y)  \]
and for $x \not = 0$, 
\[ \Breg{\xi_y}(x,y)  = \|x\|^{p}\Breg{\xi_z} (e_x,\tfrac{y}{\|x\|}), \qquad \xi_z \in \partial J(\tfrac{y}{\|x\|}). \]
Thus, for $y \not = 0$, \eqref{main1} is equivalent to 
\[ \Breg{\xi_{e_y}}(z,  e_y)  \leq C_M \Breg{\xi_{z}}( e_y,z) \]  
with $z = \tfrac{x}{\|y\|}$. 
We may calculate for $y \not = 0$ and   $\theta = 
 \dual{e_y, e_z} = \sign(y) \sign(z) $ and $z = \|z\| e_z$ that  
\[   \Breg{\xi_{e_y}}(z,  e_y) = f(\|z\|,\theta)   \qquad  \Breg{\xi_{z}}( e_y,z)  = g(\|z\|,\theta).  \] 
Thus, for the case $1 < p <2$, and $y \not = 0$, the theorem follows from 
Lemma~\ref{lemmatech}. The case of $y = 0$ can be estimated directly by 
$\Breg{\xi_0}(x,0) \leq \frac{1}{1-p} \Breg{\xi_x}(0,x)$. 

Considering now the case $2 < p < \infty$. From the duality formula \eqref{dualf} we obtain that 
 \eqref{main1} is equivalent to 
 \[ B_x^*(\xi_y,\xi_x) \leq C_M  B_y^*(\xi_x,\xi_y). \] 
Since $\partial J(\R) = \R$, and $J^* = \frac{1}{q}\|.\|^q$, $\frac{1}{q} + \frac{1}{p} = 1$ 
we obtain that \eqref{main1} holds with the constant 
\[ \frac{2}{q-1} = 2 (p-1), \] 
which finishes the proof. 

The analysis furthermore gives the more precise estimate 
\[ \frac{1}{t-1} \leq C_p \leq \frac{1}{t-1} \max_{r\geq1}
\frac{{r}^{t-1} + 1}{{r}^{t-1} +  {r}^{t-2}}  \qquad t = \begin{cases} p & 1 < p < 2  \\
        \frac{p}{p-1} & p \geq 2 \end{cases}. \]
The proof can be extended verbatim with the same constant to the case of $X$ begin the space of 
sequences with values in a Hilbert space and 
\[ J(x) = \frac{1}{p}\|x\|_{\ell^p(N,H)}^p =
\sum_{i=1}^n  \frac{1}{p} \|x_i\|_H^p   \]
 or Hilbert-space valued functions on an interval $L^p(I,H)$.
 
 \section*{Acknowledgement}
The work was  partly supported by the Austrian Science Fund (FWF) project
P 30157-N31. 

 \bibliographystyle{siam}
 \bibliography{mybib}

\begin{thebibliography}{1}

\bibitem{ET99}
{\sc I.~Ekeland and R.~T\'emam}, {\em Convex analysis and variational
  problems}, SIAM, Philadelphia, 1999.

\bibitem{F18}
{\sc J.~Flemming}, {\em A converse result for banach space convergence rates in
  tikhonov-type convex regularization of ill-posed linear equations}, J.
  Inverse Ill-Posed Probl.
\newblock In press, published ahead of print (2018).

\bibitem{HKPS07}
{\sc B.~Hofmann, B.~Kaltenbacher, C.~P\"oschl, and O.~Scherzer}, {\em A
  convergence rates result for {T}ikhonov regularization in {B}anach spaces
  with non-smooth operators}, Inverse Problems, 23 (2007), pp.~987--1010.

\bibitem{K16}
{\sc S.~Kindermann}, {\em Convex {T}ikhonov regularization in {B}anach spaces:
  new results on convergence rates}, J. Inverse Ill-Posed Probl., 24 (2016),
  pp.~341--350.

\bibitem{RS06}
{\sc E.~Resmerita and O.~Scherzer}, {\em Error estimates for non-quadratic
  regularization and the relation to enhancement}, Inverse Problems, 22 (2006),
  pp.~801--814.

\bibitem{XR91}
{\sc Z.~B. Xu and G.~F. Roach}, {\em Characteristic inequalities of uniformly
  convex and uniformly smooth {B}anach spaces}, J. Math. Anal. Appl., 157
  (1991), pp.~189--210.

\end{thebibliography}
\end{document}